\documentclass[12pt]{article}

\usepackage{amsmath}
\usepackage{amsthm}
\usepackage{amsfonts}

\usepackage{enumitem}

\usepackage{setspace}

\usepackage{color}

\usepackage{url}

\usepackage{marvosym}
\usepackage{MnSymbol}

\usepackage{verbatimbox} 

\usepackage{tikz-qtree}
\tikzset{downshift/.style={yshift=-40pt}}
\usepackage{cancel}

\usepackage{scalerel}
\usepackage{stackengine,wasysym}

\newcommand\reallywidetilde[1]{\ThisStyle{%
  \setbox0=\hbox{$\SavedStyle#1$}
  \setbox1=\hbox{$x_1$}
  \stackengine{-.1\LMpt}{$\SavedStyle#1$}{%
    \stretchto{\scaleto{\SavedStyle\mkern.2mu\AC}{.5150\wd0}}{.8\ht1}%
  }{O}{c}{F}{T}{S}%
}}


\newcommand{\rev}[1]{{#1}^R}
\newcommand{\drev}[1]{\overset{R}{\reallywidetilde{#1}}}
\newcommand{\revant}[1]{\reallywidetilde{#1}}

\newtheorem{theorem}{Theorem}[section]

\newtheorem{lemma}[theorem]{Lemma}
\newtheorem{corollary}[theorem]{Corollary}

\newcommand{\thistheoremname}{}
\newtheorem*{genericthm}{\thistheoremname}
\newenvironment{namedtheorem}[1]
  {\renewcommand{\thistheoremname}{#1}%
   \begin{genericthm}}
  {\end{genericthm}}

\theoremstyle{definition}
\newtheorem{definition}{Definition}[section]
\newtheorem{conjecture}[theorem]{Conjecture}

\theoremstyle{remark}

\title{On avoidability of formulas with reversal}
\author{James Currie, Lucas Mol, and Narad Rampersad}
\date{}

\begin{document}

\maketitle

%
%
%
%

\begin{abstract}

While a characterization of unavoidable formulas (without reversal) is well-known, little is known about the avoidability of formulas with reversal in general.  In this article, we characterize the unavoidable formulas with reversal that have at most two one-way variables ($x$ is a one-way variable in formula with reversal $\phi$ if exactly one of $x$ and $\rev{x}$ appears in $\phi$).

\vspace{0.5cm}

\noindent
\textit{Keywords:} pattern avoidance; formula with reversal; unavoidability.

\noindent
Mathematics Subject Classification 2010: 68R15
\end{abstract}


\section{Preliminaries}

An \textit{alphabet} is a finite set of letters.  A \textit{word} $w$ is a finite sequence of letters from some alphabet.  The empty word is denoted $\varepsilon.$  The set of all words over alphabet $A$ (including the empty word) is denoted $A^*.$  For words $v$ and $w,$ we say that $v$ is a \textit{factor} of $w$ if there are words $x$ and $y$ (possibly empty) such that $w=xvy.$  An \textit{$\omega$-word} over alphabet $A$ is an infinite sequence of letters from $A.$  The set of all $\omega$-words over $A$ is denoted $A^\omega.$  A (finite) word $v$ is a \textit{factor} of $\omega$-word $\mathbf{w}$ if there is a word $x$ and an $\omega$-word $\mathbf{y}$ such that $\mathbf{w}=xv\mathbf{y}.$  An $\omega$-word $\mathbf{w}$ is \textit{recurrent} if every finite factor of $\mathbf{w}$ appears infinitely many times in $\mathbf{w}.$

Let $\Sigma$ be a set of letters called \textit{variables}.  A \textit{pattern} $p$ over $\Sigma$ is a finite word over alphabet $\Sigma$.  A \textit{formula} $\phi$ over $\Sigma$ is a finite set of patterns over $\Sigma.$  We usually use dot notation to denote formulas; that is, for $p_1,\dots,p_n\in \Sigma^*$ we let
\[
p_1\cdot p_2\cdot \dots\cdot p_n=\{p_1,p_2,\dots,p_n\}.
\]
Formulas were introduced by Cassaigne \cite{CassaigneThesis}, where it was shown that every formula corresponds in a natural way to a pattern with the same \textit{avoidability index} (see \cite{CassaigneThesis} or \cite{ClarkThesis} for details).  Essentially, this means that formulas are a natural generalization of patterns in the context of avoidability.

For an alphabet $\Sigma,$ define the \textit{reversed alphabet} $\Sigma^R=\{\rev{x}\colon\ x\in \Sigma\},$ where $\rev{x}$ denotes the \textit{reversal} or \textit{mirror image} of variable $x.$  A \textit{pattern with reversal} over $\Sigma$ is a finite word over alphabet $\Sigma\cup\Sigma^R.$  A \textit{formula with reversal} over $\Sigma$ is a finite set of words over $\Sigma\cup\Sigma^R,$ i.e.\ a finite set of patterns with reversal over $\Sigma.$  The elements of a formula (with reversal) $\phi$ are called the \textit{fragments} of $\phi.$  A pattern (with reversal) $p$ is called a \textit{factor} of $\phi$ if $p$ is a factor of some fragment of $\phi.$

For words over any alphabet $A,$ we denote by $\revant{-}$ the reversal antimorphism; if $a_1,a_2,\dots, a_n\in A$, then
\[
\reallywidetilde{a_1a_2\dots a_n}=a_na_{n-1}\dots a_1.
\]
We say that a morphism $f:(\Sigma\cup \Sigma^R)^*\rightarrow A^*$ \textit{respects reversal} if $f(\rev{x})=\revant{f(x)}$ for all variables $x\in \Sigma.$  Note that any morphism $f:\Sigma^*\rightarrow A^*$ extends uniquely to a morphism from $(\Sigma\cup \Sigma^R)^*$ that respects reversal.

Let $p$ be a pattern (with reversal).  An \textit{instance} of $p$ is the image of $p$ under some non-erasing morphism (respecting reversal).  A word $w$ \textit{avoids} $p$ if no factor of $w$ is an instance of $p.$  Let $\phi$ be a binary formula (with reversal).  We say that $\phi$ \textit{occurs} in $w$ if there is a non-erasing morphism $h$ (which respects reversal) such that the $h$-image of every fragment of $\phi$ is a factor of $w.$  In this case we say that $\phi$ occurs in $w$ \textit{through $h$}, or that $w$ \textit{encounters} $\phi$ through $h.$  If $\phi$ does not occur in $w$ then we say that $w$ \textit{avoids} $\phi.$  We say that $\phi$ is \textit{avoidable} if there are infinitely many words over some finite alphabet $A$ which avoid $\phi$. Equivalently, $\phi$ is avoidable if there is a (recurrent) $\omega$-word $\mathbf{w}$ over some finite alphabet $A$ such that every finite prefix of $\mathbf{w}$ avoids $\phi.$ If $\phi$ is not avoidable, we say that $\phi$ is \textit{unavoidable}.

In order to define divisibility of formulas with reversal, we require a different notion of reversal in $(\Sigma\cup\rev{\Sigma})^*$ which not only reverses the letters of a word in $(\Sigma\cup \rev{\Sigma})^*$, but also swaps $x$ with $\rev{x}$ for all $x\in\Sigma.$  For $x_1,x_2,\dots,x_n\in\Sigma\cup \rev{\Sigma},$ we define \textit{$d$-reversal} $\drev{-}$ by
\[
\drev{x_1x_2\dots x_n}=\revant{\rev{x}_1\rev{x}_{2}\dots\rev{x}_n}=\rev{x}_n\rev{x}_{n-1}\dots\rev{x}_1,
\]
where $\rev{(\rev{x})}=x$ for all $x\in\Sigma.$  A morphism $h:(\Sigma\cup\rev{\Sigma})^*\rightarrow(\Sigma\cup \rev{\Sigma})^*$ \textit{respects $d$-reversal} if 
\[
h(\rev{x})=\drev{h(x)}
\] 
for all $x\in\Sigma.$  Note that any morphism $f:\Sigma^*\rightarrow (\Sigma\cup\rev{\Sigma})^*$ extends uniquely to a morphism from $(\Sigma\cup \Sigma^R)^*$ that respects $d$-reversal.

Let $\phi$ and $\psi$ be formulas with reversal.  We say that $\phi$ \textit{divides} $\psi$ if there is a non-erasing morphism $h:(\Sigma\cup \rev{\Sigma})^*\rightarrow (\Sigma\cup \rev{\Sigma})^*$ which respects $d$-reversal such that the $h$-image of every fragment of $\phi$ is a factor of $\psi.$  For example, the formula $xyx\cdot \rev{y}$ divides the formula $xyzxyz\cdot \rev{z}\rev{y}\rev{z}$ through the morphism respecting $d$-reversal $h$ defined by $h(x)=x$ and $h(y)=yz$, meaning 
\[
h(\rev{x})=\drev{h(x)}=\rev{x} \mbox{ and } h(\rev{y})=\drev{h(y)}=\rev{z}\rev{y}.
\] 
This is easily verified as $h(xyx)=xyzx$ is a factor of the fragment $xyzxyz$ and $h(\rev{y})=\rev{z}\rev{y}$ is a factor of the fragment $\rev{z}\rev{y}\rev{z}.$ 

It is straightforward to show that if $\phi$ divides $\psi$ through morphism respecting $d$-reversal $h$ and $\psi$ occurs in a word $w$ through morphism respecting reversal $f$, then $f\circ h$ respects reversal and $\phi$ occurs in $w$ through $f\circ h.$  Thus if $\psi$ is unavoidable and $\phi$ divides $\psi$, then $\phi$ is unavoidable as well.  We say that $\phi$ and $\psi$ are \textit{equivalent} if $\phi$ divides $\psi$ and $\psi$ divides $\phi.$

\begin{definition}
Let $\phi$ be a formula with reversal over alphabet $\Sigma.$  A variable $x\in \Sigma$ is called 
\begin{itemize}
\item \textit{two-way} in $\phi$ if both $x$ and $\rev{x}$ appear in $\phi$;
\item \textit{one-way} in $\phi$ if either $x$ or $\rev{x}$ appears in $\phi$ (but not both); and
\item \textit{absent} from $\phi$ if neither $x$ nor $\rev{x}$ appears in $\phi.$
\end{itemize}
\end{definition}

Note that if $x$ is a one-way variable in $\phi,$ then $\phi$ is equivalent to a formula with reversal in which $x$ appears and $\rev{x}$ does not.  Unless explicitly stated otherwise, when we say that $x$ is a one-way variable in $\phi,$ we will assume that $x$ appears in $\phi$ and $\rev{x}$ does not.  Note also that if $x$ is a two-way variable in $\phi$ and $\phi$ occurs in a word $w$ through morphism respecting reversal $h,$ then $h(x)$ and $h(\rev{x})=\revant{h(x)}$ must both appear in $w$, i.e.\ $h(x)$ is a \textit{reversible factor} of $w.$

Unavoidable formulas without reversal are well understood.  A characterization of unavoidable patterns (which easily generalizes to formulas) was given independently by Bean, Ehrenfeucht, and McNulty \cite{BEM1979} and Zimin \cite{Zimin1984}.  The statement of this result requires some terminology.  

Let $\phi$ be a formula over $\Sigma.$  For each variable $x\in \Sigma,$ make two copies of $x$: $x^\ell$ and $x^r.$  The \textit{adjacency graph} of $\phi,$ denoted $AG(\phi),$ is the bipartite graph on vertex set $\{x^\ell\colon\ x\in \Sigma\}\cup \{x^r\colon\ x\in \Sigma\}$ and edge set $\{\{x^\ell,y^r\}\colon\ xy \mbox{ is a factor of } \phi\}.$  A nonempty subset $F$ of letters appearing in $\phi$ is called a \textit{free set} of $\phi$ if $x^\ell$ and $y^r$ are in different connected components of $AG(\phi)$ for any $x,y\in F.$ 

We say that a formula $\phi$ \textit{reduces} to $\psi$ if $\psi=\delta_F(\phi)$ for some free set $F,$ where $\delta_F(\phi)$ is the formula obtained from $\phi$
by deleting all occurrences of variables from $F$, discarding any empty fragments (denoted $\delta_x(\phi)$ if $F=\{x\}$).  We say that $\phi$ is \textit{reducible} if there is a sequence of formulas $\phi=\phi_0,\phi_1,\dots,\phi_k$ such that $\phi_i$ reduces to $\phi_{i+1}$ for all $i\in\{0,\dots,k-1\}$ and $\phi_k$ is the empty set.

\begin{theorem}[Bean et al.\ \cite{BEM1979} and Zimin \cite{Zimin1984}]
A formula is avoidable if and only if it is reducible.
\end{theorem}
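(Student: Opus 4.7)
The plan is to prove both directions separately: (a) if $\phi$ is reducible then $\phi$ is avoidable, and (b) if $\phi$ is avoidable then $\phi$ is reducible. Direction (b) is easier to attack in its contrapositive form: every non-reducible formula is unavoidable.

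For (a), I would induct on the length $k$ of a reduction sequence $\phi = \phi_0, \phi_1, \dots, \phi_k = \emptyset$. The base case $k=0$ is trivial, since the empty formula is vacuously avoided by every word. For the inductive step, suppose $\phi$ reduces via a free set $F$ to $\psi = \delta_F(\phi)$, and by induction there is an infinite word $\mathbf{w}$ over a finite alphabet $B$ avoiding $\psi$. Let $c$ be a fresh letter and build $\mathbf{w}'$ over $B \cup \{c\}$ by applying a suitable padding morphism of $\mathbf{w}$ (for instance $b \mapsto cb$, possibly with more elaborate separators encoding the partition of variables by component of $AG(\phi)$ when $|F| > 1$). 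The defining property of $F$---that no $x^\ell$ and $y^r$ with $x,y \in F$ share a component of $AG(\phi)$---ensures that any hypothetical occurrence of $\phi$ in $\mathbf{w}'$ would have its $F$-variables cleanly strippable, inducing an occurrence of $\psi$ in $\mathbf{w}$ and contradicting the choice of $\mathbf{w}$.

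For (b), I would argue via the Zimin words $Z_1 = x_1$ and $Z_{n+1} = Z_n x_{n+1} Z_n$. The strategy decomposes into two ingredients: (i) every non-reducible formula $\phi$ divides some Zimin word $Z_n$, and (ii) every Zimin word $Z_n$ is itself unavoidable. Combined with the divisibility-transfer property recalled in the preliminaries (divisibility of $\phi$ into an unavoidable $\psi$ yields unavoidability of $\phi$), these give that every non-reducible formula is unavoidable. For (i) I would induct on the number of variables in $\phi$; non-reducibility forces every singleton $\{x\}$ to fail the free-set test, so $x^\ell$ and $x^r$ lie in the same component of $AG(\phi)$ for every variable $x$, which provides enough adjacency structure to define a morphism witnessing that $\phi$ embeds into a suitably large $Z_n$.

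The main obstacle I anticipate is ingredient (ii), the unavoidability of each $Z_n$. The standard route is a Ramsey-style double induction on $n$ and the alphabet size $k$: one establishes a threshold $N(n,k)$ such that every word of length at least $N(n,k)$ over a $k$-letter alphabet contains an instance of $Z_n$. The inductive step locates many instances of $Z_{n-1}$ (by the inductive hypothesis applied to long subwords), then applies pigeonhole to the right-extensions of these instances to find two whose extensions agree letter-for-letter far enough to be glued into an instance of $Z_n$ with the repeated variable $x_n$ correctly matched. Controlling the pigeonhole quantities---and in particular ensuring the morphism produced is non-erasing and respects the nested structure of $Z_n$---is where the technical heart of the proof lies.
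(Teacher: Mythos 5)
Your two halves are attached to the wrong sides of the equivalence, and both break as a result. With the paper's definitions (reducible means the free-set deletion process reaches the empty formula), reducibility characterizes the \emph{unavoidable} formulas, not the avoidable ones; the statement as printed here has ``avoidable'' where the theorem of Bean--Ehrenfeucht--McNulty and Zimin says ``unavoidable''. Concretely, $xyx$ is reducible (delete the free set $\{x\}$, then $\{y\}$) yet unavoidable, while $xx$ has no free set and is avoidable. Taking the printed statement at face value, your direction (a) already fails at its base case: the empty formula is vacuously \emph{encountered} by every word (there is no fragment whose image needs to be found), so it is unavoidable, not ``avoided by every word''. Your ingredient (i) of direction (b) is also false: $xx$ is non-reducible, but Zimin words are square-free, so $xx$ divides no $Z_n$. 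Moreover, non-reducibility does not force every singleton to fail the free-set test (the formula $xx\cdot y$ has the free set $\{y\}$ but is not reducible), and even when it does, having $x^\ell$ and $x^r$ in the same component of $AG(\phi)$ for every variable is the hallmark of the \emph{avoidable} (``locked'') case --- exactly the opposite of embedding into a Zimin word.

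The two toolkits you describe are the right ones, but they must be swapped. The Zimin-word machinery proves ``reducible implies unavoidable'': $Z_n$ is unavoidable by the pigeonhole induction you outline (this is precisely how the paper proves Theorem 2.3 in the more general setting $Z_{m,n}$), and one shows along the reduction sequence that if $\delta_F(\phi)$ divides $Z_{n-1}$ then $\phi$ divides $Z_n$ with the variables of $F$ sent to $x_1$ --- the ``Moreover'' clause of Zimin's Theorem as stated in the paper. The padding construction proves the other direction in the form ``if $\delta_F(\phi)$ is avoidable then $\phi$ is avoidable'', which, together with a separate argument that formulas admitting no free set at all are avoidable, gives ``non-reducible implies avoidable'' by induction on the number of variables. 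Even in that correct placement your sketch is thin: a naive $b\mapsto cb$ padding does not suffice; the construction must exploit the bipartition of the components of $AG(\phi)$ determined by the free set so that the images of the deleted variables are forced into the separator blocks, and the locked case requires its own avoiding word. So beyond the inverted logic, the two genuinely hard lemmas are asserted rather than proved.
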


A second useful characterization of unavoidable patterns was proven by Zimin.  Let $x_n$ for $n\in\mathbb{N}$ be different variables.  Let $Z_0=\varepsilon$ and for $n\geq 1$ define $Z_{n}=Z_{n-1}x_nZ_{n-1}.$  The patterns $Z_n$ are called \textit{Zimin words} or \textit{sesquipowers}.  Zimin words are easily seen to be unavoidable, and in fact they are maximally unavoidable in some sense, as every unavoidable formula divides some Zimin word.  The following result stated for patterns in \cite{Zimin1984} is easily seen to generalize to formulas.

\begin{namedtheorem}{Zimin's Theorem}[Zimin \cite{Zimin1984}]
Let $\phi$ be a formula (without reversal) on an alphabet of size $n.$  Then $\phi$ is unavoidable if and only if $\phi$ divides the Zimin word $Z_n.$  Moreover, if $F$ is a free set of $\phi$ such that $\delta_F(\phi)$ is unavoidable, then $\phi$ divides $Z_n$ through a morphism $h$ with $h(y)=x_1$ for every $y\in F.$
\end{namedtheorem}

Little is known about the avoidability of formulas with reversal in general.  In \cite{CurrieLafrance2016}, Currie and Lafrance classified all binary patterns with reversal (i.e.\ patterns with reversal over $\Sigma=\{x,y\}$) by their avoidability index.  In particular, they found that every unavoidable binary pattern with reversal is equivalent to some factor of $xyx$ or $xy\rev{x}.$  In \cite{HighIndex}, the authors presented a family of formulas with reversal of high avoidability index.  In this article, we begin work on a generalization of Zimin's Theorem to formulas with reversal.

\section{Zimin formulas with reversal}

We begin by defining the \textit{Zimin formulas with reversal}, which generalize the Zimin words.  Ideally, we would like to define the Zimin formulas with reversal so that the obvious generalization of Zimin's Theorem holds.  In other words, we would like to be able to say that a formula with reversal is unavoidable if and only if it divides some Zimin formula with reversal.  Our main result is that this characterization holds at least for the formulas with reversal that have at most two one-way variables.

It will be convenient to use the notation $x^\sharp=\{x,\rev{x}\}$ for any variable $x\in \Sigma$.  For sets of words $X$ and $Y,$ we let $XY=\{xy\colon\ x\in X, y\in Y\}.$  For example, 
\[
x^\sharp y^\sharp=\{xy,x\rev{y},\rev{x}y,\rev{x}\rev{y}\}
\]
or in dot notation,
\[
x^\sharp y^\sharp=xy\cdot x\rev{y}\cdot \rev{x}y\cdot \rev{x}\rev{y}
\]
For a single word $w$, we often write $w$ in place of $\{w\}$ when using this notation.  For example,
\[
x^\sharp yx^\sharp=xyx\cdot xy\rev{x} \cdot \rev{x} yx\cdot \rev{x}y\rev{x}
\]

\begin{definition}
For nonnegative integers $m$ and $n,$ define the \textit{Zimin formula with reversal} $Z_{m,n}$ by
\[
Z_{m,0}=x_1^\sharp\dots x_m^\sharp
\]
and
\[
Z_{m,n}=Z_{m,n-1}y_nZ_{m,n-1}.
\]
\end{definition}

Note that $Z_{m,n}$ has $m$ two-way variables $x_1,\dots, x_m$ and $n$ one-way variables $y_1,\dots,y_n.$  Also, $Z_{m,n}$ has $(2^{m})^{(2^n)}$ fragments, each of length $(m+1)2^n-1.$  Note also that when $m=0$ we have $Z_{0,n}=Z_n,$ the usual Zimin word.  We have already mentioned that the usual Zimin words are unavoidable, and we now show that this result generalizes to Zimin formulas with reversal.

\begin{theorem}\label{ZiminUnavoidable}
For any $m,n\geq 0,$ the Zimin formula with reversal $Z_{m,n}$ is unavoidable.
\end{theorem}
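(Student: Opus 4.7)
The plan is to prove, by induction on $n$, a slight strengthening of Theorem~\ref{ZiminUnavoidable}: every recurrent $\omega$-word $\mathbf{w}$ over a finite alphabet $A$ realizes $Z_{m,n}$ through a morphism $h$ respecting reversal with $h(x_i)\in A$ a single letter for every two-way variable $x_i$. The reason for imposing the single-letter restriction is the following collapse: if $h(x_i)\in A$, then $h(\rev{x_i})=\revant{h(x_i)}=h(x_i)$, so two fragments of $Z_{m,n}$ that differ only in their $\sharp$-choices at $x_i$-positions have identical $h$-images. A straightforward induction on $n$ shows that every fragment of $Z_{m,n}$ is obtained from one common skeleton by making such $\sharp$-choices, so all $(2^m)^{2^n}$ fragments collapse to a single word under $h$. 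Thus the condition ``$h$ realizes $Z_{m,n}$'' reduces to the requirement that one specific word appear as a factor of $\mathbf{w}$.

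The base case $n=0$ is immediate: pick any factor $a_1\cdots a_m$ of $\mathbf{w}$ (which exists since $\mathbf{w}$ is infinite) and set $h(x_i)=a_i$; by the collapse, every fragment of $Z_{m,0}=x_1^\sharp\cdots x_m^\sharp$ maps to $a_1\cdots a_m$, a factor of $\mathbf{w}$ by construction.

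For the inductive step, apply the strengthened hypothesis at $(m,n-1)$ to obtain a reversal-respecting morphism $h^*$ realizing $Z_{m,n-1}$ with $h^*(x_i)\in A$ single letters. By the collapse, every fragment of $Z_{m,n-1}$ has the same image $V$ under $h^*$, and $V$ is a factor of $\mathbf{w}$. Since $\mathbf{w}$ is recurrent, $V$ occurs at infinitely many positions of $\mathbf{w}$, and in particular one can choose two occurrences whose starting positions differ by strictly more than $|V|$; letting $Y$ be the nonempty word strictly between them, $VYV$ is a factor of $\mathbf{w}$. Extending $h^*$ by $h^*(y_n)=Y$ yields a morphism $h$ under which every fragment $f_1\,y_n\,f_2$ of $Z_{m,n}=Z_{m,n-1}\,y_n\,Z_{m,n-1}$ maps to $h^*(f_1)\,Y\,h^*(f_2)=VYV$, a factor of $\mathbf{w}$; this completes the induction.

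The only real obstacle is isolating the right strengthened inductive statement: the restriction of $h$ to single-letter images on two-way variables is what triggers the $(2^m)^{2^n}$-fold collapse of fragments and thereby turns each inductive step into a one-line application of recurrence. With this strengthening in hand the argument sidesteps any Ramsey-style or adjacency-graph machinery.
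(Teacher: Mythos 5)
Your proof is correct, and its inductive skeleton is the same as the paper's: strengthen the induction so that all $(2^m)^{2^n}$ fragments of $Z_{m,n}$ share a single image $V$ (you enforce this via single-letter images on the two-way variables, the paper via the requirement that every fragment map to the same factor — these amount to the same thing, since the paper's base case also sends each $x_i$ to a single letter and never changes it), then find $V$ repeated with a nonempty gap $Y$ and set $h(y_n)=Y$. Where you genuinely differ is the engine producing the repetition: the paper works with finite words and a pigeonhole count ($k^\ell+1$ blocks of length $\ell$ separated by single letters), proving the quantitative statement that every sufficiently long word over a $k$-letter alphabet encounters $Z_{m,n}$, which gives unavoidability directly from the primary definition; you work with a recurrent $\omega$-word and extract the repetition from recurrence, which is shorter but leans on the equivalence, stated in the preliminaries, between avoidability and the existence of a recurrent $\omega$-word all of whose finite prefixes avoid the formula. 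To close the loop from your strengthened statement you should say explicitly that, since $Z_{m,n}$ has finitely many fragments, an occurrence in $\mathbf{w}$ is already an occurrence in some finite prefix of $\mathbf{w}$, so no recurrent $\omega$-word can witness avoidability. In exchange for that reliance on the equivalence, the paper's finitary argument yields an effective bound on how long a word must be before it encounters $Z_{m,n}$, while your argument dispenses with the counting and makes the inductive step a one-line use of recurrence.
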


\begin{proof}
Let $A$ be an alphabet of size $k.$  We prove the stronger statement that $Z_{m,n}$ occurs in any sufficiently long word $w\in A^*$ under a morphism respecting reversal that sends every fragment of $Z_{m,n}$ to the same factor of $w.$  We proceed by induction on $n.$  For the base case, the formula with reversal $Z_{m,0}$ occurs in any word $w=w_1\dots w_m$ of length $m$ under the morphism respecting reversal $h$ defined by $h(x_i)=w_i.$  Note that every fragment of $Z_{m,0}$ is sent to the same factor $w_1\dots w_m.$  Now consider the formula with reversal $Z_{m,n}.$  By the induction hypothesis, there is some $\ell\in\mathbb{N}$ such that the formula $Z_{m,n-1}$ occurs in any word $v\in A^*$ of length $\ell$ through some morphism respecting reversal that sends every fragment of $Z_{m,n-1}$ to the same word.  Let $w$ be any word of length $k^\ell(\ell+1)+\ell$ over $A.$  We can think of $w$ as the concatenation of $k^\ell+1$ factors of length $\ell$ separated by individual letters.  By the pigeonhole principle, at least one of these factors of length $\ell$ appears twice; let $v$ be such a factor.  Let $h$ be a morphism respecting reversal that shows an occurrence of $Z_{m,n-1}$ in $v$ and maps every fragment of $Z_{m,n-1}$ to the same factor $u$ of $v$.  Then certainly $uxu$ is a factor of $w$ for some $x\neq \varepsilon.$ Extending $h$ by $h(y_n)=x,$ we see that the $h$-image of every fragment of $Z_{m,n}$ is $uxu,$ and thus $h$ gives an occurrence of $Z_{m,n}$ in $w$ that satisfies the required condition.
\end{proof}

It follows from Theorem \ref{ZiminUnavoidable} that if formula with reversal $\phi$ divides a Zimin formula with reversal then $\phi$ is unavoidable.  The remainder of this article is devoted to the question of whether or not the converse of this statement holds.  We know that it holds for formulas with reversal with no two-way variables (i.e.\ formulas without reversal) by Zimin's Theorem.  We demonstrate that it also holds for formulas with reversal with any number of two-way variables and at most two one-way variables.

\section{Avoidable formulas with reversal}

In this section, we prove several lemmas which give sufficient conditions for a formula with reversal to be avoidable.  These will be used extensively in the next section.  We begin by introducing a useful operation on patterns and formulas with reversal.

\begin{definition}
Let $p$ be a pattern with reversal over $\Sigma.$  The \textit{flattening} of $p$, denoted $p^\flat,$ is the image of $p$ under the morphism defined by $x\mapsto x$ and $\rev{x}\mapsto x$ for all $x\in\Sigma$.  We say that $p$ \textit{flattens} to $p^\flat.$

The \textit{flattening} of a formula with reversal $\phi,$ denoted $\phi^\flat,$ is the set of flattenings of all fragments of $\phi,$ i.e.\ $\phi^\flat=\{p^\flat\colon\ p\in\phi\}.$  Again, we say that $\phi$ \textit{flattens} to $\phi^\flat.$
\end{definition}

Essentially, flattening a formula with reversal just involves ignoring the superscript $R$ on any mirror image variables that appear.

We make use of \textit{direct product words} in several of the proofs in this section.  For words $v=v_0v_1\dots$ and $w=w_0w_1\dots$ of the same length (possibly infinite) over alphabets $A_v$ and $A_w$, the \textit{direct product} of $v$ and $w,$ denoted $v\oplus w,$ is the word on alphabet $A_v\times A_w$ defined by
\[
v\oplus w=(v_0,w_0)(v_1,w_1)\dots.
\]
It is sometimes helpful to visualize the ordered pairs as column vectors instead, as below:
\[
v\oplus w=\binom{v_0}{w_0}\binom{v_1}{w_1}\dots.
\]
Clearly if a formula (with reversal) $\phi$ occurs in $v\oplus w$ through morphism $h,$ then $\phi$ also occurs in both $v$ and $w$ by considering the corresponding projection of $h.$

Now we are ready to prove some sufficient conditions for a formula with reversal $\phi$ to be avoidable.  The first such condition is that the related formula $\phi^\flat$ is avoidable.

\begin{lemma}\label{FlatLemma}
Let $\phi$ be a formula with reversal.  If $\phi^\flat$ is avoidable, then $\phi$ is avoidable.
\end{lemma}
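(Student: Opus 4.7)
The plan is to build, from an $\omega$-word $\mathbf{w}$ over some alphabet $A$ avoiding $\phi^\flat$, an $\omega$-word $\mathbf{u}$ that avoids $\phi$. First I would replace $\mathbf{w}$ by its direct product with a Thue square-free word over $\{0,1,2\}$; projecting to the two coordinates shows the product still avoids $\phi^\flat$ and is also square-free, so without loss of generality $\mathbf{w}$ enjoys both properties. Then, letting $\bar{A}$ denote a disjoint copy of $A$, I would form $\mathbf{u}=w_1\bar{w}_1 w_2\bar{w}_2 w_3\bar{w}_3\cdots$ over $A\cup\bar{A}$, a ``letter-doubled'' version of $\mathbf{w}$ with alternating bars.

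The first main step is to show that $\mathbf{u}$ is \emph{reversal-rigid}: no factor $v$ of $\mathbf{u}$ with $|v|\geq 2$ has $\revant{v}$ as a factor of $\mathbf{u}$. The argument is a direct position-by-position comparison: if both $v$ and $\revant{v}$ occurred in $\mathbf{u}$, matching letters (in both alphabet and barred/unbarred status) would force two consecutive letters of $\mathbf{w}$ to agree, contradicting square-freeness. Also, $\mathbf{u}$ has no letter-square $\alpha\alpha$, since consecutive letters of $\mathbf{u}$ alternate between $A$ and $\bar{A}$.

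Now suppose for contradiction that $\phi$ occurs in $\mathbf{u}$ through some $h$ respecting reversal. For each two-way variable $x$, both $h(x)$ and $\revant{h(x)}=h(\rev{x})$ are factors of $\mathbf{u}$, so reversal-rigidity forces $|h(x)|=1$ and $h(x)=h(\rev{x})$. Consequently $h(p)=h(p^\flat)$ as words for every fragment $p$. Composing with the ``forget-bars'' morphism $\mu:A\cup\bar{A}\to A$ sending $\bar{a}\mapsto a$, we obtain $h':=\mu\circ h$ with $h'(p^\flat)$ a factor of $\mu(\mathbf{u})=\mathbf{w}^{(2)}$, the letter-doubled version of $\mathbf{w}$.

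The last step is to descend from $\mathbf{w}^{(2)}$ to $\mathbf{w}$: since $\mathbf{w}$ is square-free, each factor of $\mathbf{w}^{(2)}$ compresses uniquely (by collapsing maximal runs of equal letters) to a factor of $\mathbf{w}$, and the corresponding ``compressed'' morphism gives an occurrence of $\phi^\flat$ in $\mathbf{w}$, contradicting the hypothesis. The main obstacle will be this descent: the compression of a concatenation can differ from the concatenation of compressions because of boundary effects, so handling the gluing between consecutive images $h'(x_{i_j})$ and $h'(x_{i_{j+1}})$ carefully will be needed to define a well-defined non-erasing morphism witnessing $\phi^\flat$ in $\mathbf{w}$.
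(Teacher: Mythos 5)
The first half of your argument is sound: the barred doubling $\mathbf{u}=\sigma(\mathbf{w})$ (with $\sigma(a)=a\bar a$ and $\mathbf{w}$ square-free) is indeed reversal-rigid, so any occurrence of $\phi$ in $\mathbf{u}$ must send every two-way variable to a single letter, whence $h(\rev{x})=h(x)$ and you get an occurrence of $\phi^\flat$ in $\mathbf{u}$. This is the same key idea as in the paper. But the final descent step is a genuine gap, not a routine boundary-bookkeeping issue, and the specific route you propose cannot work as stated. Once you apply the bar-forgetting morphism $\mu$ and work inside $\mu(\mathbf{u})=\mathbf{w}^{(2)}=w_1w_1w_2w_2\cdots$, you have discarded exactly the information that could make the descent possible: $\mathbf{w}^{(2)}$ is full of squares even though $\mathbf{w}$ is square-free, so an occurrence of a formula in $\mathbf{w}^{(2)}$ need not yield one in $\mathbf{w}$, and ``compress maximal runs'' does not produce a witnessing morphism. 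Concretely, if $h(x)=a$ and $h(y)=\bar a$ with $xy$ a factor of some fragment, then $h'(x)=h'(y)=a$ and the compressed morphism sends $xy$ to $aa$, which is not a factor of the square-free word $\mathbf{w}$; the alternative of erasing barred letters instead produces an erasing morphism, and repairing either defect requires a substantial further argument that you have not supplied (it is not even clear that your $\mathbf{u}$ avoids $\phi$ without such an argument).

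The paper sidesteps this entirely by using a direct product rather than a length-changing morphism: it takes $\mathbf{w}\oplus(123)^\omega$, in which the only reversible factors are single letters, so two-way variables collapse exactly as in your argument; but since the product has $\mathbf{w}$ as a coordinate-wise projection of the \emph{same length}, the resulting occurrence of $\phi^\flat$ projects immediately to an occurrence in $\mathbf{w}$, with no compression, no alignment cases, and no erasure problem. If you want to salvage your construction, you would have to keep the bars and analyze occurrences of $\phi^\flat$ in $\sigma(\mathbf{w})$ directly (a case analysis over how each variable's image aligns with the blocks $a\bar a$), which is considerably more delicate than the product argument.
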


\begin{proof}
Suppose that $\phi^\flat$ is avoidable.  Let $\mathbf{w}$ be an $\omega$-word that avoids $\phi^\flat.$  We claim that the word $\mathbf{w}\oplus (123)^\omega$ avoids $\phi.$  Suppose otherwise that $\phi$ occurs in $\mathbf{w}\oplus (123)^\omega$ through morphism $h.$  The only reversible factors of $(123)^\omega$ are single letters, so $h$ must map every two-way variable in $\phi$ to a single letter.  However, then $h(\rev{x})=\revant{h(x)}=h(x)$ for every two-way variable $x$ in $\phi,$ and thus $h$ gives an occurrence of $\phi^\flat$ in $\mathbf{w}\oplus (123)^\omega$.  But then $\phi^\flat$ occurs in $\mathbf{w},$ a contradiction.
\end{proof}

Lemma \ref{FlatLemma} is useful because $\phi^\flat$ has no mirror image variables, and the avoidability of formulas without reversal is well understood.  Using Lemma \ref{FlatLemma}, the following corollaries are easily obtained from well-known sufficient conditions for avoidability of patterns without reversal (Corollary 3.2.10 and Corollary 3.2.11 in \cite{LothaireAlgebraic}, respectively).  

\begin{corollary}
Let $p$ be a pattern with reversal.  If every letter in $p^\flat$ appears twice then $p$ is avoidable.
\end{corollary}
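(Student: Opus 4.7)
The plan is to apply Lemma \ref{FlatLemma} and reduce to a known result about patterns without reversal. Since a pattern is just a formula (with reversal) consisting of a single fragment, the flattening $p^\flat$ is in particular a formula without reversal, and Lemma \ref{FlatLemma} applies to it.

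First I would observe that the hypothesis says every variable appearing in $p^\flat$ occurs exactly twice in $p^\flat.$ By the cited result (Corollary 3.2.10 in \cite{LothaireAlgebraic}), any pattern in which every letter appears at least twice is avoidable; in particular $p^\flat$ is an avoidable pattern without reversal. Then I would invoke Lemma \ref{FlatLemma} directly: since $p^\flat$ is avoidable, so is $p.$

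There is really no obstacle here beyond quoting the two results in the right order. The only subtlety worth flagging explicitly in the write-up is that Lemma \ref{FlatLemma} is stated for formulas, so one should note at the outset that a pattern is a formula with a single fragment, so the lemma applies verbatim to $p$ and $p^\flat.$ The proof can be executed in two or three lines.
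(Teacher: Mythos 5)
Your proposal is correct and matches the paper's intended argument exactly: the paper derives this corollary from the classical condition (Corollary 3.2.10 in Lothaire) applied to $p^\flat$ together with Lemma \ref{FlatLemma}, just as you do. Nothing further is needed.
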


\begin{corollary}\label{Length2tothen}
Let $p$ be a pattern with reversal over an alphabet $\Sigma$ of order $n.$  If $|p|\geq 2^n$ then $p$ is avoidable.
\end{corollary}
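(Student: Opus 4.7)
The plan is to reduce this to the analogous statement for patterns without reversal via Lemma \ref{FlatLemma}. The flattening map preserves length, so $|p^\flat| = |p| \geq 2^n$, and it does not enlarge the alphabet: every variable of $p^\flat$ lies in $\Sigma$, so $p^\flat$ is a pattern (without reversal) over an alphabet of size at most $n$. Since $|p^\flat| \geq 2^n \geq 2^{|\Sigma(p^\flat)|}$, the classical result (Corollary 3.2.11 in \cite{LothaireAlgebraic}) applies and gives that $p^\flat$ is avoidable.

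Now I would invoke Lemma \ref{FlatLemma}. Strictly speaking, Lemma \ref{FlatLemma} is stated for formulas with reversal, but a pattern $p$ can be identified with the single-fragment formula $\{p\}$, whose flattening is $\{p^\flat\}$; thus the avoidability of $p^\flat$ immediately implies the avoidability of $p$. Combining the two steps completes the proof.

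There is no real obstacle here: the hard work has already been done in proving Lemma \ref{FlatLemma} and in the classical Lothaire bound. The only thing to double-check is the bookkeeping on the alphabet size of $p^\flat$ (which can be strictly smaller than $n$ if some variable of $\Sigma$ does not appear in $p$, but this only makes the hypothesis of the classical bound easier to satisfy).
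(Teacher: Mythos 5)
Your proposal is correct and matches the paper's approach exactly: the paper also obtains this corollary by flattening $p$ and combining Lemma \ref{FlatLemma} with the classical bound (Corollary 3.2.11 in \cite{LothaireAlgebraic}). The remark about the alphabet of $p^\flat$ possibly being smaller than $n$ is a fine (and harmless) piece of bookkeeping.
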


The next lemma gives a simple sufficient condition for a formula with reversal with all two-way variables to be avoidable.

\begin{lemma}\label{AllTwoWay}
Let $\phi$ be a formula with reversal such that every variable is two-way in $\phi.$  If some variable appears twice in a single fragment of $\phi^\flat,$ then $\phi$ is avoidable.
\end{lemma}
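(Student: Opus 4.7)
The plan is to exhibit an explicit avoiding $\omega$-word, in the spirit of the direct product word used for Lemma~\ref{FlatLemma} but even simpler. Fix a fragment $p$ of $\phi$ whose flattening $p^\flat$ contains some variable $x$ at two distinct positions $i<j$, and let $\ell = |p|$. I would set $m = \max(3,\ell)$ and consider the periodic word $\mathbf{u} = (12\cdots m)^\omega$.

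First I would verify that the reversible factors of $\mathbf{u}$ have length $1$: any factor of $\mathbf{u}$ of length $k\geq 2$ has the form $a(a{+}1)\cdots(a{+}k{-}1)$ with indices taken mod $m$, i.e.\ it is strictly increasing mod $m$; since $m \geq 3$, the reverse of such a factor is strictly decreasing mod $m$ and is therefore not a factor of $\mathbf{u}$. This is the only place the hypothesis $m \geq 3$ is used and it covers the edge cases where $\ell \leq 2$.

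Next I would assume, for contradiction, that $\phi$ occurs in $\mathbf{u}$ through some morphism $h$ respecting reversal. Since every variable $y$ of $\phi$ is two-way, both $h(y)$ and $h(\rev{y})=\revant{h(y)}$ occur as factors of $\mathbf{u}$, so $h(y)$ is a reversible factor and must be a single letter. Consequently $h(\rev{y})=h(y)$ for every variable $y$, which forces $h(p)=h(p^\flat)$.

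Finally, $h(p^\flat)$ is a factor of $\mathbf{u}$ of length $\ell \leq m$, so it is a run of $\ell$ consecutive residues mod $m$, all distinct. But $p^\flat$ carries the variable $x$ at both positions $i$ and $j$, so the letter $h(x)$ appears at positions $i$ and $j$ of $h(p^\flat)$, contradicting the distinctness. Hence $\mathbf{u}$ avoids $\phi$ and $\phi$ is avoidable. The proof is quite short; the only real check is the reversibility statement about $\mathbf{u}$ in the first step, and that is routine.
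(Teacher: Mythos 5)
Your proof is correct and uses essentially the same mechanism as the paper: an occurrence in the periodic word $(12\cdots m)^\omega$ forces every two-way variable to a single letter (since the only reversible factors are single letters), so the repeated variable yields a repeated letter within a single period, a contradiction. The only difference is that you let the period $m$ grow with the fragment length, which lets you dispense with the paper's case split and its appeal to Corollary~\ref{Length2tothen} when the gap between the two occurrences is long, at the harmless cost of an alphabet whose size depends on $\phi$ rather than only on the number of variables.
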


\begin{proof}
Let $\phi$ be a formula with reversal over an alphabet $\Sigma$ of order $n$ and suppose that some variable $x$ appears twice in some fragment of $\phi^\flat.$  Let $f$ be a minimal factor of $\phi$ that flattens to a factor of $\phi^\flat$ containing two appearances of $x.$  Then $f^\flat=xvx,$ where $v$ is a pattern over $n-1$ variables.  If $|v|\geq 2^{n-1}$ then $\phi$ is avoidable by Corollary \ref{Length2tothen}, so we may assume that $|v|< 2^{n-1}.$  

Let $m=2^{n-1}+1.$  We claim that the word $(123\dots m)^\omega$ avoids $\phi.$  Suppose otherwise that there is a morphism respecting reversal $h$ showing an occurrence of $\phi$ in $(123\dots m)^\omega.$  The only reversible factors in $(123\dots m)^\omega$ are single letters, so $|h(z)|=1$ for all variables $z$ appearing in $\phi.$  But then $h(f)=h(f^\flat)=h(x)h(v)h(x)$ is a factor of $(123\dots m)$ and $|h(v)|=|v|<2^{n-1}.$  So the letter $h(x)$ repeats in $(123\dots m)^\omega$ with at most $2^{n-1}-1=m-2$ letters in between.  This is a contradiction.
\end{proof}

The next lemma concerns formulas with reversal that have at least one one-way variable.  The contrapositive of this lemma is used frequently in the next section: if $\phi$ is unavoidable and has at least one one-way variable, then there is some one-way variable $y$ that appears at most once in any fragment of $\phi.$

\begin{lemma}\label{OneWayTwice}
Let $\phi$ be a formula with reversal with at least one one-way variable.  If for each one-way variable $y$ there is some factor $f_y$ of $\phi$ such that $y$ appears twice in $f_y,$ then $\phi$ is avoidable.
\end{lemma}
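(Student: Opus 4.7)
The plan is to construct an infinite word avoiding $\phi$ by combining the direct-product trick from Lemma~\ref{FlatLemma} (to restrict two-way variables to single-letter images) with a fractional-power-free word of Dejean type (to forbid the near-repetitions that the hypothesis forces on each one-way variable). For each one-way variable $y$, the hypothesis supplies a fragment of $\phi$ in which $y$ appears twice; taking two consecutive such occurrences produces a factor $y q_y y$ of $\phi$ with $y\notin q_y$. Set $Q=\max_y |q_y|$, the maximum taken over all one-way variables.

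By Dejean's theorem there is an infinite $d_k^+$-power-free word $\mathbf{u}$ over an alphabet of size $k := Q+3$, where $d_k$ is the repetition threshold for alphabets of size $k$; a short calculation, using $d_3=7/4$, $d_4=7/5$, and $d_k=k/(k-1)$ for $k\geq 5$, shows that $d_k<(Q+2)/(Q+1)$ in every case. Put $\mathbf{w}=\mathbf{u}\oplus (123)^\omega$. Since the only reversible factors of $(123)^\omega$ are single letters, the same holds for $\mathbf{w}$, so every two-way variable $x$ of $\phi$ in any occurrence of $\phi$ in $\mathbf{w}$ has $h(x)$ equal to a single letter.

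The argument proceeds by supposing for contradiction that $\phi$ occurs in $\mathbf{w}$ through some morphism $h$ respecting reversal. Choose a one-way variable $y^*$ that maximizes $|h(y^*)|$, and set $\ell=|h(y^*)|$ and $g=|h(q_{y^*})|$. Because $y^*\notin q_{y^*}$, every variable or mirror image inside $q_{y^*}$ is either two-way (image length $1$) or a one-way variable different from $y^*$ (image length at most $\ell$ by the choice of $y^*$); therefore $g\leq |q_{y^*}|\cdot\ell\leq Q\ell$. The factor $h(y^*)\,h(q_{y^*})\,h(y^*)$ of $\mathbf{w}$ has length $2\ell+g$ and period at most $\ell+g$, so it is a fractional power of exponent at least
\[
\frac{2\ell+g}{\ell+g}\;\geq\;\frac{Q+2}{Q+1}\;>\;d_k,
\]
the first inequality being equivalent, via cross-multiplication, to $g\leq Q\ell$. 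Projecting onto the $\mathbf{u}$-coordinate yields a factor of $\mathbf{u}$ of the same length and no larger period, hence a fractional power of exponent strictly greater than $d_k$, contradicting the $d_k^+$-freeness of $\mathbf{u}$.

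The main obstacle is calibrating the exponent bound so that it beats Dejean's threshold, and the key to doing this is the choice of $y^*$: without selecting the one-way variable of longest image, a one-way variable occurring inside $q_{y^*}$ could have an image much longer than $h(y^*)$, driving $g/\ell$ (and hence the period) up and squeezing the exponent toward $1$, out of reach of any Dejean word. With the maximality choice the estimate $g\leq Q\ell$ holds, and the alphabet size $k=Q+3$ is calibrated just large enough that $d_k$ slips below $(Q+2)/(Q+1)$.
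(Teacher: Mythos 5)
Your proof is correct and takes essentially the same route as the paper: form the direct product with $(123)^\omega$ to force two-way variables onto single-letter images, use a Dejean word to rule out the factor $h(y)h(q_y)h(y)$, and pick the one-way variable with the longest image to obtain the exponent bound $(2\ell+g)/(\ell+g)$. The only difference is calibration: the paper first reduces to $|q_y|<2^{n-1}$ via Corollary \ref{Length2tothen} and works over a fixed alphabet of size $2^{n-1}+2$, whereas you tune the alphabet size $Q+3$ to the actual maximum gap $Q$, avoiding that preliminary case split.
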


\begin{proof}
Let $\phi$ be a formula with reversal over an alphabet $\Sigma$ of order $n$.  Let $\Sigma_1$ be the set of one-way variables in $\phi$ and let $\Sigma_2$ be the set of two-way variables in $\phi.$  For each variable $y\in \Sigma_1,$ assume without loss of generality that $y$ appears in $\phi$ and not $\rev{y}.$  Let $f_y$ be a minimal factor of $\phi$ containing two appearances of $y$ (if there is more than one such factor, choose one).  We have $f_y=yv_yy,$ where $v_y$ is a pattern with reversal over $\Sigma\backslash\{y\}.$  If $|v_y|\geq 2^{n-1}$ then $\phi$ is avoidable by Corollary \ref{Length2tothen}, so we may assume that $|v_y|< 2^{n-1}.$  

Let $\mathbf{w}$ be an $\omega$-word that avoids $(2^{n-1}+1)/2^{n-1}$-powers; such a word exists on $2^{n-1}+2$ letters by Dejean's Theorem (proven independently by Currie and Rampersad \cite{DejeanCurrieRampersad} and Rao \cite{DejeanRao}).  We claim that $\mathbf{w}\oplus (123)^\omega$ avoids $\phi.$  Suppose otherwise that there is a morphism respecting reversal $h$ showing an occurrence of $\phi$ in $\mathbf{w}\oplus(123)^\omega.$  First of all, the only reversible factors in $\mathbf{w}\oplus(123)^\omega$ are single letters, so $|h(z)|=1$ for all $z\in\Sigma_2.$  It follows that $h(z)=h(\rev{z})$ for all $z\in\Sigma_2,$ and hence $h(f_y)=h(f_y^\flat)=h(y)h(v_y)h(y)$ for all $y\in\Sigma_1.$  Let $x\in\Sigma_1$ be a variable satisfying $|h(x)|\geq |h(y)|$ for all $y\in\Sigma_1.$  Then clearly $|h(x)|\geq |h(z)|$ for all variables $z\in\Sigma_1\cup \Sigma_2.$  Since $|v_x|<2^{n-1},$ we see that $h(f_x)=h(x)h(v_x)h(x)$ is at least a $(2^{n-1}+1)/2^{n-1}$-power.  Since $\mathbf{w}$ avoids such powers, $h(f_x)$ is not a factor of $\mathbf{w}\oplus (123)^\omega.$  Therefore, $\phi$ cannot occur in $\mathbf{w}\oplus(123)^\omega.$
\end{proof}

We close this section with one last sufficient condition for a formula with reversal to be avoidable.

\begin{lemma}\label{TwoWayMiddle}
Let $y$ be a two-way variable in a formula with reversal $\phi$ over $\Sigma.$  If $xy, $ $yz$, and $xz$ are factors of $\phi^\flat$ for variables $x,z\in \Sigma$ then $\phi$ is avoidable.
\end{lemma}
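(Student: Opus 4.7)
The plan is to exhibit a single $\omega$-word that avoids $\phi$, namely the periodic word $(123)^\omega$. Suppose toward contradiction that $\phi$ occurs in $(123)^\omega$ through some morphism $h$ respecting reversal. The only reversible factors of $(123)^\omega$ are single letters, so every two-way variable of $\phi$ is mapped by $h$ to a single letter; in particular, $h(y)$ is a single letter $b \in \{1,2,3\}$. Let $b'$ denote the unique letter with $b'b$ a factor of $(123)^\omega$, and $b''$ the unique letter with $bb''$ a factor; note $b' \neq b''$.

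By hypothesis there exist length-$2$ factors $p_1, p_2, p_3$ of $\phi$ with $p_1^\flat = xy$, $p_2^\flat = yz$, and $p_3^\flat = xz$. Write $p_1 = u_1 y_1$, $p_2 = y_2 w_2$, and $p_3 = u_3 w_3$, where each $u_i \in \{x, \rev{x}\}$, each $y_i \in \{y, \rev{y}\}$, and each $w_i \in \{z, \rev{z}\}$. The key step is to observe that $h(u_1) = h(u_3)$ and $h(w_2) = h(w_3)$. Indeed, if $x$ is one-way then $u_1 = u_3$ is simply the unique element of $\{x, \rev{x}\}$ appearing in $\phi$; and if $x$ is two-way then $|h(x)| = 1$, whence $h(x) = h(\rev{x})$. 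The argument for $z$ is identical, handling $w_2$ and $w_3$.

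From $h(p_1) = h(u_1) b$ being a factor of $(123)^\omega$ we deduce that the last letter of $h(u_1)$ equals $b'$; from $h(p_2) = b\, h(w_2)$ being a factor we deduce that the first letter of $h(w_2)$ equals $b''$. By the key step, the last letter of $h(u_3)$ is also $b'$ and the first letter of $h(w_3)$ is also $b''$, so $b'b''$ appears as a factor of $h(p_3) = h(u_3) h(w_3)$, and therefore of $(123)^\omega$. But in $(123)^\omega$ the unique letter following $b'$ is $b$, not $b''$, a contradiction. The only delicate point of the argument is the uniform treatment of the one-way and two-way cases for $x$ and for $z$; once that is packaged as the key step, both degenerate configurations (such as $x = z$, where $b'b''$ straddles a boundary inside $h(x)h(x)$) and the generic configuration are handled by the same chain of reasoning.
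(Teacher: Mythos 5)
Your proposal is correct and follows essentially the same route as the paper: show that $(123)^\omega$ avoids $\phi$ by noting that two-way variables (in particular $y$) must map to single letters, forcing the letters adjacent to $h(y)$ in the images of the $xy$- and $yz$-factors to produce a forbidden two-letter factor inside the image of the $xz$-factor. Your explicit packaging of the one-way/two-way uniformity as a ``key step'' is just a more careful writing of the same argument.
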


\begin{proof}
Let $\phi$ be a formula with reversal with two-way variable $y$ such that $xy,$ $yz,$ and $xz$ are factors of $\phi^\flat.$  We will show that $(123)^\omega$ avoids $\phi.$  Suppose towards a contradiction that $\phi$ occurs in $(123)^\omega$ through morphism $h.$  The only reversible factors of $(123)^\omega$ are single letters, so $|h(y)|=1$ and thus $h(y)=h(\rev{y}).$  Without loss of generality, assume $h(y)=2.$  

If $x$ and $z$ are two-way in $\phi,$ then they are also mapped to single letters by $h$.  If $x$ is one-way in $\phi$, then assume that $x$ appears in $\phi$ (and not $\rev{x}$).  Make the analogous assumption for $z.$  Suppose that factor $f_{xy}$ flattens to $xy,$ $f_{yz}$ flattens to $yz,$ and $f_{xz}$ flattens to $xz.$  If $h(f_{xy})=h(x)h(y)$ is a factor of $(123)^\omega$ then $h(x)$ ends in $1.$  Similarly if $h(f_{yz})=h(y)h(z)$ is a factor of $(123)^\omega$ then $h(z)$ starts with $3.$  But then $h(f_{xz})=h(x)h(z)$ contains the factor $13,$ which does not appear in $(123)^\omega.$  Hence we have reached a contradiction, and $(123)^\omega$ avoids $\phi.$
\end{proof}

In the next section we apply the results from this section as we work towards a characterization of the unavoidable formulas with reversal.

\section{Unavoidable formulas with reversal}

Here we take a first step towards characterizing unavoidable formulas with reversal.  We achieve such a characterization for the formulas with reversal that have at most two one-way variables.

\begin{theorem}\label{MainTheorem}
Let $\phi$ be a formula with reversal with $m\geq 0$ two-way variables and $n\leq 2$ one-way variables.  Then $\phi$ is unavoidable if and only if it divides $Z_{m,n}.$
\end{theorem}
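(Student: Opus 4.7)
The ``if'' direction is immediate: $Z_{m,n}$ is unavoidable by Theorem~\ref{ZiminUnavoidable}, and the discussion at the end of Section~1 shows that divisors of unavoidable formulas are unavoidable. The substance is the converse, which I plan to prove by induction on $n\in\{0,1,2\}$.

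For the base case $n=0$, all $m$ variables of $\phi$ are two-way. Lemma~\ref{AllTwoWay} rules out any repeated variable within a fragment of $\phi^\flat$, and Lemma~\ref{TwoWayMiddle} rules out every triangle configuration $xy,yz,xz$ among the length-two factors of $\phi^\flat$. A short additional argument using $\omega$-words of the form $(a_1\cdots a_k)^\omega$, whose only reversible factors are single letters, further forbids directed cycles in the length-two factor graph of $\phi^\flat$ (in particular, having both $xy$ and $yx$ appear as factors). These constraints, together with the unavoidability of $\phi^\flat$ itself (inherited via the contrapositive of Lemma~\ref{FlatLemma}), should force the existence of a non-erasing map from the variables of $\phi$ into $\{x_1,\dots,x_m\}$ sending each fragment of $\phi^\flat$ to a consecutive substring of $x_1x_2\cdots x_m$. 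Lifting this to respect $d$-reversal then exhibits $\phi$ as a divisor of $Z_{m,0}=x_1^\sharp\cdots x_m^\sharp$.

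For the inductive step $n\in\{1,2\}$, the contrapositive of Lemma~\ref{OneWayTwice} furnishes a one-way variable $y$ that appears at most once in every fragment of $\phi$, so each fragment is either $y$-free or uniquely of the form $\alpha y\beta$. Let $\phi'$ be the formula with reversal obtained by replacing each $\alpha y\beta$ with the pair of fragments $\alpha,\beta$ (and discarding any empties). Then $\phi'$ has $m$ two-way and $n-1$ one-way variables, and since any occurrence of $\phi$ in a word restricts (by forgetting $y$) to an occurrence of $\phi'$, the unavoidability of $\phi$ transfers to $\phi'$. By induction $\phi'$ divides $Z_{m,n-1}$ via some morphism $h'$, and the natural candidate division of $\phi$ is the extension of $h'$ by $h(y)=y_n$, sending each fragment $\alpha y\beta$ of $\phi$ to $h'(\alpha)\,y_n\,h'(\beta)$.

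The principal obstacle is verifying this candidate: fragments of $Z_{m,n}=Z_{m,n-1}\,y_n\,Z_{m,n-1}$ have the form $A\,y_n\,B$ with $A,B$ fragments of $Z_{m,n-1}$, so for $h'(\alpha)\,y_n\,h'(\beta)$ to be a factor of such a fragment, $h'(\alpha)$ must be a \emph{suffix} of a fragment of $Z_{m,n-1}$ and $h'(\beta)$ must be a \emph{prefix}---a condition strictly stronger than being an arbitrary factor. I therefore expect the inductive hypothesis to need strengthening, flagging each fragment of $\phi'$ that arose from a split as either right-anchored (its image must land as a suffix) or left-anchored (its image must land as a prefix). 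For $n=2$ these anchors propagate through a second level of the recursion and ultimately into a refined version of the base case in which the morphism into $Z_{m,0}$ must place designated fragments as prefixes or suffixes of $x_1^\sharp\cdots x_m^\sharp$. Carrying out this anchoring bookkeeping consistently across both the base case and the inductive step is, I expect, the technical heart of the proof.
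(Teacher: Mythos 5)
Your plan is honest about where its difficulties lie, but the two places you flag --- the structural claim in the base case and the prefix/suffix ``anchoring'' in the inductive step --- are exactly the mathematical content of the theorem, and neither is carried out, so the proposal has a genuine gap rather than a different complete proof. In the base case ($n=0$, all variables two-way), the conditions you extract (no repeated variable in a fragment of $\phi^\flat$, no triangle $xy,yz,xz$, no directed cycle among length-two factors) are purely adjacency-type constraints, whereas embedding every fragment of $\phi^\flat$ into the distinct-letter word $x_1\cdots x_m$ also requires globally consistent \emph{positions}: for example, the all-two-way formula $xyzw\cdot xw$ satisfies all three local conditions yet admits no such embedding, since the long fragment forces $x$ and $w$ to sit at distance three while the short one forces them adjacent. (In that example $\phi^\flat$ happens to be avoidable, so your remaining hypothesis might exclude it, but you would then have to prove that unavoidability of $\phi^\flat$ plus your local conditions always rules out every distance inconsistency --- and that is precisely the step hidden in ``should force,'' not an obvious one.) In the inductive step you correctly diagnose that ``$\phi'$ divides $Z_{m,n-1}$'' is too weak an induction hypothesis, because the images of the split pieces $\alpha$ and $\beta$ must land as a suffix and a prefix of fragments of $Z_{m,n-1}$, coherently across all fragments and under one morphism; you leave that strengthened statement and its proof entirely open, and it is the technical heart you would still have to supply.

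The paper avoids both difficulties with a single device your proposal never uses: since $\phi$ is unavoidable, it must occur in the specific periodic word $\mathbf{w}_{m+1}=(x_1\cdots x_{m+1})^\omega$ through some reversal-respecting morphism $h$. For $m\geq 2$ the only reversible factors of $\mathbf{w}_{m+1}$ are single letters, so every two-way variable has a one-letter image and some letter, say $x_{m+1}$, is unused by the two-way variables; after normalizing $h$ on the at most two one-way variables so that $x_{m+1}$ occurs exactly once in each of their images, Lemma~\ref{OneWayTwice} (applied to $\phi$ and to $\phi$ with $y_2$ deleted) bounds how often one-way variables occur in a fragment, so each $h(f)$ sits inside a factor of $(x_1\cdots x_{m+1})^3x_1\cdots x_m$; replacing the occurrences of $x_{m+1}$ by the appropriate $y_j$ and re-extending so as to respect $d$-reversal yields the division of $\phi$ by $Z_{m,n}$ directly, with no induction on $n$. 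The occurrence in the periodic word supplies for free exactly what your plan still has to manufacture: a single assignment of positions to all variables that encodes the distance constraints and makes all fragment placements simultaneously consistent (your anchoring problem disappears). Note also that $m=1$ genuinely needs separate treatment --- in $(x_1x_2)^\omega$ the reversible factors are not just single letters --- and the paper handles it via square-freeness of $\phi^\flat$, Lemma~\ref{TwoWayMiddle}, and freeness of $\{x_1\}$, a case your outline does not isolate.
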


\begin{proof}
First we note that the case $m=0$ is already covered by Zimin's Theorem, so we may assume $m\geq 1.$  By Theorem \ref{ZiminUnavoidable}, $Z_{m,n}$ is unavoidable, so the $(\Leftarrow)$ direction follows immediately.  We now prove the $(\Rightarrow)$ direction.  Let $\phi$ be a formula with reversal with $m\geq 1$ two-way variables $x_1,\dots x_m$ and $n$ one-way variables $y_1,\dots y_n,$ and let $\Sigma=\{x_1,\dots,x_m\}\cup\{y_1,\dots y_n\}.$  Suppose that $\phi$ is unavoidable.  We have two cases: $m=1$ and $m\geq 2.$  Each case is broken into $3$ subcases (for $n=0,1,2,$ respectively).  First we handle the case $m=1.$
\begin{description}
\item[Case 1a) $m=1$, $n=0$:] In this case, $\phi$ is a unary formula with reversal and it is straightforward to show that $\phi$ divides $Z_{1,0}$ using the fact that $\phi^\flat$ must avoid squares (this follows from Lemma \ref{FlatLemma} as $\phi$ is unavoidable).

\item[Case 1b) $m=1,$ $n=1$:] By Lemma \ref{OneWayTwice}, $y_1$ appears at most once in each fragment of $\phi.$  Further, since $\phi^\flat$ avoids squares, every factor of $\phi$ flattens to some factor of $x_1y_1x_1.$  We conclude that $\phi$ divides $Z_{1,1}=x_1^\sharp y_1x_1^\sharp$ through the inclusion map. 

\item[Case 1c) $m=1,$ $n=2$:] By Lemma \ref{OneWayTwice}, some one-way variable (say $y_2$, without loss of generality) appears at most once in each fragment of $\phi.$  We claim that $\{x_1\}$ is a free set in $\phi^\flat$ and that $\delta_{x_1}(\phi^\flat)$ is unavoidable.  First suppose otherwise that $\{x_1\}$ is not a free set of $\phi^\flat.$  Then there is some path from $x_1^\ell$ to $x_1^r$ in the adjacency graph $AG(\phi^\flat)$ of $\phi^\flat.$  Since there are no edges of the form $\{a^\ell,a^r\}$ in $AG(\phi^\flat)$ for $a\in \{x_1,y_1,y_2\},$ the path from $x_1^\ell$ to $x_1^r$ must be $x_1^\ell y_1^r y_2^\ell x_1^r$ or $x_1^\ell y_2^r y_1^\ell x_1^r.$  So $\phi^\flat$ contains the factors $x_1y_1,$ $y_2y_1,$ and $y_2x_1$; or $x_1y_2,$ $y_1y_2,$ and $y_1x_1$; respectively.  Both situations are impossible by Lemma \ref{TwoWayMiddle}.  It remains to show that $\delta_{x_1}(\phi^\flat)$ is unavoidable.  We will show that every fragment of $\delta_{x_1}(\phi^\flat)$ is a factor of $y_1y_2y_1$; it follows that $\delta_{x_1}(\phi^\flat)$ divides $Z_2$ and hence is unavoidable by Zimin's Theorem.  Recall that $y_2$ appears at most once in every fragment of $\phi,$ so it suffices to show that if $y_1$ appears twice in some fragment of $\phi,$ they are on opposite sides of an appearance of $y_2.$  If we replace every occurrence of $y_2$ in $\phi$ with a dot, we are left with a formula with reversal on $\{x_1,y_1\}$ that must be unavoidable.  By Lemma \ref{OneWayTwice}, $y_1$ appears at most once in every fragment of this formula, and thus every fragment of $\delta_{x_1}(\phi^\flat)$ is a factor of $y_1y_2y_1.$

\end{description}

Now we consider the case $m\geq 2.$  Since $\phi$ is unavoidable, it certainly occurs in the word $\mathbf{w}_{m+1}=(x_1\dots x_{m+1})^\omega$ through some morphism respecting reversal $h.$  Since $m\geq 2,$ the $h$-image of every two-way variable is a single letter.  Further, since there are exactly $m$ two-way variables, there is some letter in $\mathbf{w}_{m+1}$ that is not the $h$-image of any two-way variable; we may assume that $x_{m+1}$ is such a letter without loss of generality.

Consider the image $h(y)$ of a one-way variable $y.$  If $|h(y)|>m+1,$ we can remove $m+1$ letters from the end of $h(y)$ without changing the fact that $h$ shows an occurrence of $\phi$ in $w_{m+1}.$  We can also add $m+1$ letters to the end of $h(y)$ by adding a single period of $w_{m+1}$ to $h(y),$ starting at the letter following the last letter of $h(y)$.  Thus we can assume that the letter $x_{m+1}$ appears in $h(y)$ exactly once: if $x_{m+1}$ appears more than once, remove $m+1$ letters from $h(y)$ recursively until $x_{m+1}$ appears exactly once; and if $x_{m+1}$ never appears in $h(y)$, add an appropriate period of $w_{m+1}$ to $h(y).$  From here, we have three subcases as before.

\begin{description}
\item[Case 2a) $m\geq 2,$ $n=0$:] By the observations made above, for any fragment $f$ of $\phi,$ the image $h(f)$ must be a factor of $x_1\dots x_m.$  Define a morphism $\overline{h}:(\Sigma\cup \rev{\Sigma})^*\rightarrow (\Sigma\cup \rev{\Sigma})^*$ respecting $d$-reversal by $\overline{h}(x)=h(x)$ for all $x\in \Sigma$, which means
\[
\overline{h}(\rev{x})=\drev{h(x)}=\rev{h(x)}
\]
for all $x\in\Sigma$ (the last equality follows from the fact that $h(x)$ is a single letter from $\{x_1,\dots,x_m\}$).  In other words, $\overline{h}$ takes the images of $h$ on the letters of $\Sigma$ but extends to a morphism respecting $d$-reversal instead of a morphism respecting reversal.  Thus for any fragment $f$ of $\phi,$ we have $\overline{h}(f)^\flat=h(f).$  Further, since $h(f)$ is a factor of $x_1\dots x_m,$ it follows that $\overline{h}(f)$ is a factor of $Z_{m,0}=x_1^\sharp\dots x_m^\sharp.$  We conclude that $\phi$ divides $Z_{m,0}.$

\item[Case 2b) $m\geq 2$, $n=1$:]  By the observations made above, we may assume that $x_{m+1}$ is not in the $h$-image of any two-way variable, and that $x_{m+1}$ appears exactly once in $h(y_1)$.  By Lemma \ref{OneWayTwice}, $y_1$ appears at most once in any fragment $f$ of $\phi$; hence $h(f)$ must be a factor of $x_1\dots x_mx_{m+1}x_1\dots x_m.$  Define a morphism $\overline{h}$ respecting $d$-reversal by 
\begin{align*}
\overline{h}(x)&=h(x) \mbox{ for all } x\in \Sigma\backslash y_1, \mbox{ and}\\
\overline{h}(y_1)&=t_{y_1}(h(y_1)),
\end{align*}
where $t_{y_1}:\{x_1,\dots, x_{m+1}\}\rightarrow \Sigma$ is defined by
\[
t_{y_1}(x_i)=\begin{cases}
y_1 &\mbox{ if } i=m+1;\\
x_i &\mbox{ otherwise.} 
\end{cases}
\]
Put simply, $t_{y_1}$ swaps $x_{m+1}$ for $y_1$ and leaves all other letters alone.  For any fragment $f$ of $\phi,$ we see that $\overline{h}(f)^\flat$ is a factor of $x_1\dots x_my_1x_1\dots x_m$.  Finally, since $y_1^R$ does not appear in $\phi,$ and hence does not appear in $\overline{h}(f)$ either, we conclude that $\overline{h}(f)$ is a factor of $Z_{m,1}=x_1^\sharp\dots x_m^\sharp y_1 x_1^\sharp\dots x_m^\sharp.$

\item[Case 2c) $m\geq 2,$ $n=2$:]  By the observations made above, we may assume that $x_{m+1}$ is not in the image of any two-way variable, and that $x_{m+1}$ appears exactly once in $h(y_j)$ for $j\in\{1,2\}$.  By Lemma \ref{OneWayTwice}, some one-way variable, say $y_2$, appears at most once in any fragment of $\phi.$  If we replace every appearance of $y_2$ in $\phi$ with a dot, the resulting formula must be unavoidable, and thus by Lemma \ref{OneWayTwice} again we see that the other one-way variable $y_1$ appears at most once in each fragment of the resulting formula.  Thus the variable $y_1$ appears at most twice in any fragment $f$ of $\phi,$ and if it appears twice then $y_2$ is in between the two appearances.  Since $x_{m+1}$ appears only in the images of $y_1$ and $y_2,$ and exactly once in each image, it follows that $h(f)$ must be a factor of $(x_1\dots x_mx_{m+1})^3x_1\dots x_m$ for any fragment $f$ of $\phi.$  Define a morphism $\overline{h}$ respecting $d$-reversal by 
\begin{align*}
\overline{h}(x)&=h(x) \mbox{ for all } x\in \Sigma\backslash \{y_1,y_2\}, \mbox{ and}\\ 
\overline{h}(y_j)&=t_{y_j}(h(y_j)) \mbox{ for } j\in\{1,2\} 
\end{align*}
where $t_{y_j}:\{x_1,\dots, x_{m+1}\}\rightarrow \Sigma$ is a morphism defined by
\[
t_{y_j}(x_i)=\begin{cases}
y_j &\mbox{ if } i=m+1;\\
x_i &\mbox{ otherwise.} 
\end{cases}
\]
Put simply, $t_{y_j}$ swaps $x_{m+1}$ for $y_j$ and leaves all other letters alone.  Now for any fragment $f$ of $\phi,$ we see that $\overline{h}(f)^\flat$ is a factor of 
\[
x_1\dots x_my_1x_1\dots x_my_2x_1\dots x_my_1x_1\dots x_m.
\]  
Finally, since $\rev{y_1}$ and $\rev{y_2}$ do not appear in $\phi,$ and hence do not appear in $\overline{h}(f)$ either, we conclude that $\overline{h}(f)$ is a factor of 
\[
Z_{m,2}=x_1^\sharp\dots x_m^\sharp y_1 x_1^\sharp\dots x_m^\sharp y_2 x_1^\sharp\dots x_m^\sharp y_1 x_1^\sharp\dots x_m^\sharp.
\]
\end{description}
This completes the proof.
\end{proof}

\section{Conclusion}

We have shown that a formula with reversal with at most two one-way variables is unavoidable if and only if it divides a Zimin formula with reversal.  We conjecture that this result generalizes to formulas with reversal with any number of one-way variables.

\begin{conjecture}\label{MainConjecture}
Let $\phi$ be a formula with reversal with $m$ two-way variables and $n$ one-way variables.  Then $\phi$ is unavoidable if and only if it divides $Z_{m,n}.$
\end{conjecture}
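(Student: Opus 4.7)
My plan is to split into cases based on whether $m=0$, $m=1$, or $m \geq 2$, and within each on the value of $n \in \{0,1,2\}$. The $(\Leftarrow)$ direction is immediate from Theorem \ref{ZiminUnavoidable}. The case $m=0$ reduces directly to Zimin's Theorem, so all the real work is in the subcases with $m \geq 1$, and the proofs naturally follow the pattern $n=0 \to n=1 \to n=2$, each building on the previous.

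For $m=1$ I would argue as follows. When $n=0$, Lemma \ref{FlatLemma} tells us $\phi^\flat$ is unavoidable, hence square-free, and so each fragment of the unary $\phi$ is a factor of $Z_{1,0} = x_1^\sharp$. When $n=1$, Lemma \ref{OneWayTwice} forces $y_1$ to appear at most once per fragment; combined with square-freeness of $\phi^\flat$, every fragment flattens to a factor of $x_1 y_1 x_1$, giving division into $Z_{1,1} = x_1^\sharp y_1 x_1^\sharp$ via the inclusion map. The trickiest subcase on this branch is $n=2$: I would apply Lemma \ref{OneWayTwice} to pick (say) $y_2$ as the one-way variable appearing at most once per fragment, then show $\{x_1\}$ is a free set of $\phi^\flat$ by ruling out any path from $x_1^\ell$ to $x_1^r$ in $AG(\phi^\flat)$ using Lemma \ref{TwoWayMiddle}, and finally show that $\delta_{x_1}(\phi^\flat)$ divides $Z_2$ by arguing via Lemma \ref{OneWayTwice} (applied after erasing $y_2$) that the two possible appearances of $y_1$ in a fragment must be separated by $y_2$.

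For $m \geq 2$ the plan hinges on the observation that, since $\phi$ is unavoidable, it occurs in $\mathbf{w}_{m+1} = (x_1 \cdots x_{m+1})^\omega$ through some morphism $h$ respecting reversal. Because the only reversible factors of $\mathbf{w}_{m+1}$ are single letters, $h$ collapses every two-way variable to a single letter, and since there are only $m$ two-way variables, some letter (WLOG $x_{m+1}$) is not in the image of any two-way variable. For each one-way variable $y_j$ I would then normalize $h(y_j)$ by adding or removing full periods of $\mathbf{w}_{m+1}$ so that $x_{m+1}$ appears exactly once in $h(y_j)$. Next I would define $\overline{h}$ respecting $d$-reversal by keeping $\overline{h}=h$ on two-way variables and setting $\overline{h}(y_j) = t_{y_j}(h(y_j))$, where $t_{y_j}$ swaps $x_{m+1}$ for $y_j$. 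Lemma \ref{OneWayTwice} (used repeatedly in the $n=2$ case, first to pick out $y_2$ and then after erasing $y_2$ to control $y_1$) bounds how often each $y_j$ appears per fragment, so that $\overline{h}(f)^\flat$ is a factor of the flattening of $Z_{m,n}$. Because $\rev{y_j}$ does not appear in $\phi$, this division lifts verbatim to $Z_{m,n}$.

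The main obstacle I foresee is Case 1c) ($m=1$, $n=2$) and, to a lesser extent, Case 2c) ($m\geq 2$, $n=2$). These are the subcases where two one-way variables interact nontrivially and where Lemma \ref{OneWayTwice} must be iterated: once to extract one one-way variable that appears at most once per fragment, and then again on the formula obtained by erasing that variable to control the other. In Case 1c) the free-set/adjacency-graph analysis must be combined carefully with Lemma \ref{TwoWayMiddle} to forbid all possible obstructing paths; in Case 2c) the challenge is tracking the cyclic order imposed by $h$ on the images of $y_1$ and $y_2$ so that every $\overline{h}(f)$ sits as a factor of $Z_{m,2}$ rather than some other concatenation of blocks $x_1^\sharp \cdots x_m^\sharp$.
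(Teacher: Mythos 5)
There is a genuine gap, and it is structural rather than a detail: the statement you are proving is Conjecture \ref{MainConjecture}, which allows an \emph{arbitrary} number $n$ of one-way variables, but your case analysis explicitly restricts to $n\in\{0,1,2\}$. What you have written is essentially the paper's own proof of Theorem \ref{MainTheorem} (the $n\le 2$ case), reproduced faithfully --- the $m=1$ subcases via Lemmas \ref{FlatLemma}, \ref{OneWayTwice} and \ref{TwoWayMiddle}, and the $m\ge 2$ subcases via the occurrence in $(x_1\cdots x_{m+1})^\omega$, normalization of $h(y_j)$ to contain $x_{m+1}$ exactly once, and the substitution $t_{y_j}$ --- but it does not touch $n\ge 3$, which is precisely where the statement remains open in the paper.

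The restriction is not easily lifted by ``more of the same.'' Your control of the one-way variables rests on iterating Lemma \ref{OneWayTwice} to conclude that, once the two-way variables are deleted, every fragment is a factor of $y_1y_2y_1=Z_2$; this is what guarantees each $h(f)$ sits inside a bounded window of $\mathbf{w}_{m+1}$ and that each $h(y_j)$ can be normalized to contain $x_{m+1}$ exactly once. Already for $n=3$ this fails: the formula $x^\sharp y_1x^\sharp y_2x^\sharp y_3x^\sharp y_1x^\sharp y_2x^\sharp$ is unavoidable, yet deleting the two-way variable leaves $y_1y_2y_3y_1y_2$, which is not a factor of $Z_3$. So for general $n$ one cannot expect single occurrences of $x_{m+1}$ in $h(y_j)$, nor a fixed pattern of at most two appearances per fragment; one would instead need something like Conjecture \ref{HelperConjecture} (deleting all two-way variables from an unavoidable formula with reversal leaves an unavoidable formula), a division of the resulting formula into $Z_n$ by Zimin's Theorem, and a realignment of the number of occurrences of $x_{m+1}$ in each $h(y)$ to match that division --- and even this sketch, given at the end of the paper, handles only $m>1$ and is contingent on the unproven helper conjecture. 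As it stands, your argument proves Theorem \ref{MainTheorem}, not Conjecture \ref{MainConjecture}.
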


We briefly discuss possible approaches to proving Conjecture \ref{MainConjecture}.  In Cases 1c) and 2c) of Theorem \ref{MainTheorem}, we see that when all two-way variables are deleted from an unavoidable formula with reversal with two one-way variables, we are left with a \textit{factor} of $Z_2=y_1y_2y_1.$  This fact does not generalize to the case that we have $n\geq 3$ one-way variables (consider the unavoidable formula $x^\sharp y_1x^\sharp y_2x^\sharp y_3x^\sharp y_1x^\sharp y_2x^\sharp$, for example).  However, it seems plausible that when we delete all two-way variables from an unavoidable formula with reversal we are left with an unavoidable formula.  We state this as a conjecture below.

\begin{conjecture}\label{HelperConjecture}
Let $\phi$ be a formula with reversal with set $X$ of two-way variables.  If $\phi$ is unavoidable then $\delta_{X\cup \rev{X}}(\phi)$ is unavoidable.
\end{conjecture}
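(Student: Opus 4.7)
The plan is to argue by contrapositive: assume $\psi := \delta_{X \cup \rev{X}}(\phi)$ is avoidable and construct an $\omega$-word that avoids $\phi$, contradicting the unavoidability of $\phi$. Fix an $\omega$-word $\mathbf{v}$ over a finite alphabet $B$ avoiding $\psi$. The intention is to build $\mathbf{w}$ over $B \cup C$, where $C = \{c_1, \dots, c_{m+1}\}$ is a fresh auxiliary alphabet with $m = |X|$, by interleaving $\mathbf{v}$ with blocks from the periodic word $(c_1 c_2 \cdots c_{m+1})^\omega$: say $\mathbf{w} = \sigma_1 v_1 \sigma_2 v_2 \cdots$ for appropriately chosen $C$-blocks $\sigma_i$. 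Two properties are needed: (i) every reversible factor of $\mathbf{w}$ is contained in some $\sigma_i$, so that under any morphism $h$ respecting reversal witnessing $\phi$ in $\mathbf{w}$, the image $h(x)$ of each two-way variable $x \in X$ is a $C$-factor; and (ii) the image of each one-way variable contains at least one $B$-letter. If both hold, the erasing projection $\pi \colon B \cup C \to B^{*}$ (killing $C$) recovers $\mathbf{v}$ from $\mathbf{w}$, and the induced morphism witnesses $\psi$ in $\mathbf{v}$, contradicting the choice of $\mathbf{v}$. Property (i) should be obtainable by tuning the lengths and offsets of the $\sigma_i$, possibly combined with a direct product against $(123)^\omega$ in the spirit of Lemmas \ref{FlatLemma} and \ref{OneWayTwice}, to kill cross-block palindromes such as $c_j b c_j$.

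A complementary framing reformulates the problem in terms of free-set reductions of the flattening $\phi^\flat$. By Lemma \ref{FlatLemma}, $\phi$ unavoidable implies $\phi^\flat$ unavoidable, and any single free-set reduction of an unavoidable formula again yields an unavoidable formula, since if $\phi^\flat \to \phi_1$ and $\phi_1$ were reducible to the empty set, then so would $\phi^\flat$ be. It would therefore suffice to exhibit a reduction chain $\phi^\flat = \phi_0 \to \phi_1 \to \cdots \to \psi$ in which every free set used at stage $i$ is a subset of the (remaining) two-way variables of $\phi$. Reduced to a single step, one must show that if $\phi$ is unavoidable with nonempty two-way set $X$, then $\phi^\flat$ admits a nonempty free set $F \subseteq X$; induction on $|X|$ then closes the argument.

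The principal obstacle for the interleaving construction is property (ii): nothing forbids a one-way variable from mapping to a short factor lying entirely within some $\sigma_i$, in which case its projection under $\pi$ is empty and no occurrence of $\psi$ in $\mathbf{v}$ is produced. Addressing this will likely require short $\sigma_i$ combined with a length argument along the lines of Corollary \ref{Length2tothen}, to force one-way images to span $B$-letters, or a construction whose structure depends on $\phi$ in a more intimate way. For the free-set approach, the corresponding obstacle is the single-step substep itself: the adjacency structure of $\phi^\flat$ does not distinguish two-way from one-way variables, so the argument must genuinely exploit the fact that $h(x)$ for two-way $x$ is required to be a reversible factor of the ambient word. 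Translating this morphism-level reversal constraint into a structural statement about the adjacency graph of $\phi^\flat$ is, I expect, the genuine crux of both routes.
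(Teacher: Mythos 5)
There is a genuine gap here, and in fact there is nothing in the paper to compare against: the statement you are proving is Conjecture~\ref{HelperConjecture}, which the authors explicitly leave open (``it seems plausible that\dots''). What you have written is a survey of two attack routes together with an accurate diagnosis of where each one breaks, but neither route is carried out, so no proof of the conjecture results. Concretely, in the interleaving construction the fatal point is exactly your property (ii): the morphism $h$ witnessing $\phi$ in $\mathbf{w}$ is adversarial, and no choice of the blocks $\sigma_i$ prevents $h$ from sending a one-way variable to a single letter of $C$ (or any factor inside one $\sigma_i$); in that case the projection $\pi\circ h$ is erasing on that variable, and what you recover in $\mathbf{v}$ is an occurrence not of $\psi=\delta_{X\cup\rev{X}}(\phi)$ but of a formula with \emph{further} variables deleted, which does not contradict the choice of $\mathbf{v}$. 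Length arguments in the spirit of Corollary~\ref{Length2tothen} constrain the formula, not the morphism, so they cannot force one-way images to straddle $B$-letters.

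The free-set route stalls for the reason you yourself name, plus one more. The key substep --- if $\phi$ is unavoidable with $X\neq\emptyset$, then $\phi^\flat$ has a nonempty free set $F\subseteq X$ --- is asserted, not proved, and it is essentially where the whole difficulty of the conjecture lives, since $AG(\phi^\flat)$ carries no record of which variables were two-way. Moreover, even granting that substep, the proposed induction on $|X|$ does not close: after deleting $F$ you know $\delta_F(\phi^\flat)$ is unavoidable (free-set reduction preserves unavoidability), but to invoke the inductive hypothesis you need the formula \emph{with reversal} $\delta_{F\cup\rev{F}}(\phi)$ to be unavoidable, and that does not follow --- Lemma~\ref{FlatLemma} only gives the implication ``$\phi^\flat$ avoidable $\Rightarrow$ $\phi$ avoidable,'' and its converse is false in general, which is precisely why flattening alone cannot settle questions about formulas with reversal. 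So both frameworks, as they stand, reduce the conjecture to statements that are unproven and at least as hard as the conjecture itself.
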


Let $\phi$ be a formula with reversal with $n$ one-way variables and set $X$ of two-way variables.  If Conjecture \ref{HelperConjecture} is true, then $\delta_{X\cup \rev{X}}(\phi)$ divides the Zimin word $Z_n$.  This division map (call it $d$) would tell us how to adjust the morphism $h$ through which $\phi$ occurs in $\mathbf{w}_{m+1}=(x_1\dots x_{m+1})^\omega$ to an appropriate $\overline{h}$ as in Theorem \ref{MainTheorem} Case 2c).  This would prove Conjecture \ref{MainConjecture} in the case that $m>1$, so we describe the process now.  For each one-way variable $y,$ we first adjust $h(y)$ to have $|d(y)|$ appearances of $x_{m+1}$ by removing or adding a multiple of $m+1$ letters from $h(y).$  Then we define $\overline{h}$ as in Theorem \ref{MainTheorem} Case 2c), except $t_y$ sends each appearance of $x_{m+1}$ in $h(y)$ to the corresponding letter of $d(y).$ 

\bibliographystyle{amsplain}
\bibliography{Words}

\end{document}